 \theoremstyle{plain} 
\newtheorem{thm}{Theorem}[section]
\newtheorem{cor}[thm]{Corollary}
\newtheorem{lem}[thm]{Lemma}
\newtheorem{prop}{Proposition}[section]
\newtheorem{defn}{Definition}[section]
\theoremstyle{remark}
\newtheorem{rem}{Remark}[section]
\newtheorem{exam}{Example}[section]
\begin{document}

\title
{\bf{Orderenergetic, hypoenergetic and equienergetic graphs resulting from some graph operations}}
\author {\small Jahfar T K \footnote{jahfartk@gmail.com} and Chithra A V \footnote{chithra@nitc.ac.in} \\ \small Department of Mathematics, National Institute of Technology, Calicut, Kerala, India-673601}
\date{ }
\maketitle
\begin{abstract}
 A graph $G$ is said to be orderenergetic, if its energy equal to its order and it is said to be hypoenergetic if its energy less than its order. Two non-isomorphic graphs of same order are said to be equienergetic if their energies are equal.  In this paper, we  construct some new families of orderenergetic graphs,  
   hypoenergetic  graphs,  equienergetic graphs, equiorderenergetic graphs and equihypoenergetic graphs.
\end{abstract}

\hspace{-0.6cm}\textbf{AMS classification}: 05C50
\newline
\\
{\bf{Keywords}}: {\it{orderenergetic graphs, equienergetic graphs, hypoenergetic graphs, equiorderenergetic graphs, equihypoenergetic graphs.}}
\section{Introduction}
\par In this paper, we consider simple undirected  graphs. Let $G=(V,E)$ be a simple graph of order $p$ and size $q$ with vertex set $V(G)=\{v_{1}, v_{2},...,v_{p}\}$ and edge set $E(G)=\{e_{1}, e_{2},...,e_{q}\}$.  The adjacency matrix $A(G)=[a_{ij}]$ of the graph $G$ is a square symmetric matrix of order $p$ whose $(i,j)^{th}$ entry is defined by

 \[a_{i,j}=\begin{matrix}
 \begin{cases}
 1, & \text{if $v_i$ and $v_j$ are adjacent, }\\    
 0, & \text{otherwise .}
 \end{cases}
 \end{matrix}\] 
  The   eigenvalues $\lambda_{1},\lambda_{2},...,\lambda_{p}$ of the graph $G$ are defined as the eigenvalues of its adjacency matrix $A(G)$.  If $\lambda_{1},\lambda_{2},...,\lambda_{t}$ are the distinct eigenvalues of $G$, then the spectrum of $G$ can be written as $spec(G)=\begin{pmatrix}
\lambda_1&\lambda_2& ...&\lambda_t\\

m_1&m_2&...&m_t\\
\end{pmatrix}$, where $m_j$ indicates the algebraic multiplicity of the eigenvalue $\lambda_j$, $1\leq j\leq t$ of $G$.
The energy \cite{gutman1978energy} of the graph $G$ is defined as  $\varepsilon(G)=\displaystyle\sum_{i=1}^{p} |{\lambda_i}|$. More results on graph energy are reported in \cite{gutman1978energy,balakrishnan2004energy}. Two non-isomorphic graphs are said to be cospetral if they have the same spectrum, otherwise they are known as non-cospectral.
Two non-isomorphic graphs of the same order are said to be equienergetic if they have  the same energy\cite{ramane2004equienergetic}. A graph of order $p$ is said to be hyperenergetic if its energy is greater than $2(p-1)$, otherwise graph is non hyperenergetic. 
%
Graphs of order $p$  with energy equal to $p$ is called  orderenergetic graphs\cite{akbari2020orderenergetic}. The number of graphs whose energy equal to its order are relatively small. So we are trying to find new families of orderenergetic graphs. The orderenergetic graphs are studied in \cite{akbari2020orderenergetic}.\\The spectrum of complete bipartite graph $K_{p,p}$ is \[spec(K_{p,p})=\begin{pmatrix}
-p&0&p\\

1&2p-2&1\\
\end{pmatrix}.\]
Then $\varepsilon(K_{p,p})=2p$, so $K_{p,p}$ is orderenergetic for every $p$. So  our interest is to find the orderenergetic graphs other than $K_{p,p}$. In 2007, I.Gutman  et al. \cite{gutman2007hypoenergetic}  introduced the definition of hypoenergetic graphs. A graph is said to be hypoenergetic if its energy is less than its order, otherwise it is said to be non hypoenergetic. The properties of hypoenergetic graphs are discussed in detail \cite{gutman2011hyperenergetic,gutman2008hypoenergetic,gutman2007hypoenergetic}.
In the chemical literature there are many graphs for which the energy exceeds the order of graphs. In 1973, England and Ruedenberg published a paper \cite{england1973delocalization} in which they
asked 
“why does the graph energy exceed the
number of vertices?”. In 2007, Gutman \cite{gutman2007energy} had proved that
if the graph G is regular of any non-zero degree, then G is non hypoenergetic.  The orderenergetic and hypoenergetic graphs have several applications in theoretical chemistry. A graph is said to be integral if all of its eigenvalues are integers. 
The aim of this paper is to construct new families of  
orderenergetic, hypoenergetic and equienergetic graphs using some graph operations. \\   
\par The complement graph $\overline{G}$ of $G$ is a graph with vertex set same as that of $G$ and two vertices in $\overline{G}$ are adjacent only if they are not adjacent in $G$. We shall use the following notations throughout this paper, $C_p$, $K_p$, $P_m$ and  $K_{r,s}$  denotes cycle  on $p$ vertices, complete graph on $p$ vertices,  path on $m$ vertices and  complete bipartite graph on $r+s$ vertices respectively. The symbols $I_m$ and $J_m$  will stands for the identity matrix of order $m$ and $m\times m$ matrix with all entries are ones respectively.\\ 
\indent The rest of the paper is organized as follows. In Section 2, we state some previously known results that will be needed in the subsequent sections. In Section 3, we construct some orderenergetic graphs. 
In Section 4, some new families of hypoenergetic graphs are presented. In Section 5, an infinite family of equienergetic, equiorderenergetic and equihypoenergetic graphs are given.    
 	 
 \section{Preliminaries}
In this section, we recall the concepts of the $m$-splitting graph, the $m$-shadow graph and the $m$-duplicate graph of a graph and list some previously established results.
\begin{defn}\textnormal{\cite{cvetkovic1980Spectra1}}
The Kronecker product of two graphs  $G_1$ and $G_2$ is a graph $ G_1\times G_2$ with vertex set $V(G_1) \times V(G_2)$ and the vertices $(x_1,x_2)$ and $(y_1,y_2)$ are adjacent if and only if $(x_1,y_1)$ and $(x_2,y_2)$ are edges in  $G_1$ and $G_2$ respectively.
\end{defn}
\begin{defn}\textnormal{\cite{cvetkovic1980Spectra1}}
Let $A\in M_{ m \times n}(\mathbb{R}) $ and $B\in M_{ p \times q}(\mathbb{R}) $ be two matrices of order $m\times n$ and $p\times q$ respectively. Then the Kronecker product of $A$ and $B$ is defined as follows \[ A\otimes B=\begin{bmatrix}
a_{11}B & a_{12}B & a_{13}B & \dots  & a_{1n}B \\
a_{21}B & a_{22}B & a_{23}B & \dots  & a_{2n}B \\
\vdots & \vdots & \vdots & \ddots & \vdots \\
a_{m1}B & a_{m2}B & a_{m3}B & \dots  & a_{mn}B\\
\end{bmatrix}.\]
\end{defn}
\begin{prop}\textnormal{\cite{cvetkovic1980Spectra1}}
Let $A,B\in M_{n}(\mathbb{R})$ be two matrices of order $n$. Let $\lambda$ be an eigenvalue of matrix $A$ with corresponding eigenvector $x$ and	$\mu$ be an eigenvalue of matrix $B$ with corresponding eigenvector $y$, then $\lambda\mu$ is an eigenvalue of $A\otimes B$ with corresponding eigenvector $x\otimes y.$
\end{prop}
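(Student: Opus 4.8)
The plan is to reduce the statement to the mixed-product property of the Kronecker product and then substitute the eigenvalue equations. The key algebraic fact I would establish first is the identity $(A \otimes B)(C \otimes D) = (AC) \otimes (BD)$, valid whenever the products $AC$ and $BD$ are defined. This follows from a direct block computation against the definition given above: the $(i,k)$ block of $(A\otimes B)(C\otimes D)$ is $\sum_{j}(a_{ij}B)(c_{jk}D) = \bigl(\sum_{j} a_{ij}c_{jk}\bigr)BD$, which is precisely the $(i,k)$ block of $(AC)\otimes(BD)$. No dimension difficulty arises in our setting, since $A,B$ are $n\times n$ and the eigenvectors $x,y$ are $n\times 1$, so every product is well defined and $A\otimes B$ is $n^{2}\times n^{2}$ acting on the $n^{2}\times 1$ vector $x\otimes y$.

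Next I would view $x$ and $y$ as $n\times 1$ matrices and apply the mixed-product identity with $C=x$ and $D=y$, giving $(A\otimes B)(x\otimes y) = (Ax)\otimes(By)$. Substituting the eigenvalue relations $Ax=\lambda x$ and $By=\mu y$, and then pulling the scalars out of the Kronecker product — which is immediate from the definition, since scaling either factor scales every block by the same constant — yields $(\lambda x)\otimes(\mu y) = \lambda\mu\,(x\otimes y)$. Chaining these equalities shows $(A\otimes B)(x\otimes y) = \lambda\mu\,(x\otimes y)$.

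Finally I would verify that $x\otimes y$ is a genuine eigenvector, i.e.\ that it is nonzero. Since $x$ and $y$ are eigenvectors they are both nonzero, so there exist indices $i,j$ with $x_i\neq 0$ and $y_j\neq 0$; the corresponding entry $x_i y_j$ of $x\otimes y$ is then nonzero, whence $x\otimes y\neq 0$. This confirms that $\lambda\mu$ is an eigenvalue of $A\otimes B$ with eigenvector $x\otimes y$. There is no real obstacle in this argument; the only step demanding care is the index bookkeeping in the block computation establishing the mixed-product property, after which the substitution and the nonvanishing check are routine.
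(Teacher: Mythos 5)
Your proof is correct and complete: the mixed-product identity $(A\otimes B)(C\otimes D)=(AC)\otimes(BD)$ established by block computation, the substitution $Ax=\lambda x$, $By=\mu y$ with scalars pulled out by bilinearity, and the check that $x\otimes y\neq 0$ (via a nonzero entry $x_iy_j$) together give exactly what is claimed. Note that the paper itself offers no proof of this proposition --- it is quoted as a known preliminary from \cite{cvetkovic1980Spectra1} --- so there is nothing to compare against; your argument is the standard one found in that reference, and the often-omitted nonvanishing check is a welcome addition.
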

\begin{lem}\textnormal{\cite{balakrishnan2012textbook}}\label{lem2.1}.
	If $G_1$ and $G_2$ are any two graphs, then $\varepsilon(G_1\times G_2)=\varepsilon(G_1) \varepsilon(G_2)$.
\end{lem}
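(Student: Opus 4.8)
The plan is to identify the adjacency matrix of the Kronecker product $G_1\times G_2$ with the Kronecker product of the two adjacency matrices, and then to reduce the energy computation to a product of two sums of absolute values of eigenvalues. First I would observe that, directly from the definition of the graph Kronecker product, the vertex $(x_1,x_2)$ is adjacent to $(y_1,y_2)$ precisely when $x_1y_1\in E(G_1)$ and $x_2y_2\in E(G_2)$; this is exactly the adjacency relation encoded by the matrix $A(G_1)\otimes A(G_2)$. Hence $A(G_1\times G_2)=A(G_1)\otimes A(G_2)$, and the spectral problem for the product graph becomes the spectral problem for this matrix Kronecker product.

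Next, let $\lambda_1,\dots,\lambda_{p_1}$ be the eigenvalues of $A(G_1)$ and $\mu_1,\dots,\mu_{p_2}$ those of $A(G_2)$, listed with multiplicity. By the preceding Proposition, each product $\lambda_i\mu_j$ is an eigenvalue of $A(G_1)\otimes A(G_2)$ with eigenvector $x_i\otimes y_j$, where $x_i$ and $y_j$ are the corresponding eigenvectors. The key point I need — and the step that goes slightly beyond the bare statement of the Proposition — is that these products account for the \emph{entire} spectrum with the correct multiplicities. This follows from symmetry: since adjacency matrices are real and symmetric, I can choose orthonormal eigenbases $\{x_i\}$ and $\{y_j\}$, and then the $p_1p_2$ vectors $\{x_i\otimes y_j\}$ form an orthonormal basis of $\mathbb{R}^{p_1}\otimes\mathbb{R}^{p_2}$. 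Consequently the multiset $\{\lambda_i\mu_j:1\le i\le p_1,\ 1\le j\le p_2\}$ is exactly the spectrum of $A(G_1\times G_2)$.

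Finally I would compute the energy directly. Using $|\lambda_i\mu_j|=|\lambda_i|\,|\mu_j|$ together with the factorization of the resulting double sum, I would write
\[
\varepsilon(G_1\times G_2)=\sum_{i=1}^{p_1}\sum_{j=1}^{p_2}|\lambda_i\mu_j|=\left(\sum_{i=1}^{p_1}|\lambda_i|\right)\left(\sum_{j=1}^{p_2}|\mu_j|\right)=\varepsilon(G_1)\,\varepsilon(G_2),
\]
which is the claimed identity. The only genuine obstacle is the completeness and multiplicity argument in the second step; the adjacency-matrix identification in the first step and the arithmetic in the last step are routine. I expect the orthonormal-basis argument to be the cleanest route to securing that all $p_1p_2$ eigenvalues are captured, since it simultaneously handles the possibility of repeated eigenvalues in either factor.
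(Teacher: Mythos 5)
Your proof is correct and is essentially the standard argument behind this lemma (which the paper itself only cites from the literature without proof): identify $A(G_1\times G_2)=A(G_1)\otimes A(G_2)$, observe that the spectrum of the Kronecker product is the full multiset $\{\lambda_i\mu_j\}$, and factor $\sum_{i,j}|\lambda_i\mu_j|$ into the product of the two energies. Your orthonormal-eigenbasis argument correctly fills the one non-trivial gap — that the products $\lambda_i\mu_j$ exhaust the spectrum with the right multiplicities — which the bare eigenvalue Proposition in the paper does not by itself guarantee.
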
	
\begin{defn}\textnormal{\cite{balakrishnan2012textbook}}.
	The join of graphs $G_1$ and $G_2$, $G_1\vee G_2$ is obtained from $G_1\cup G_2$ by joining every vertex of $G_1$ with every vertex of $G_2$.
\end{defn}

\begin{prop}\textnormal{\cite{cvetkovic1980Spectra1}}.\label{prop2.3}
	If $G_1$ is a $r_1$-regular graph with $n_1$ vertices and $G_2$ is a $r_2$-regular graph with $n_2$ vertices, then the characteristic polynomial of $G_1\vee G_2$ is given by $$\phi (G_1\vee G_2,x)=\frac{\phi(G_1,x)\phi (G_2,x)}{(x-r_1)(x-r_2)}[(x-r_1)(x-r_2)-n_1n_2].$$
\end{prop}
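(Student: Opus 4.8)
The plan is to read off the characteristic polynomial directly from the block structure of the adjacency matrix. Ordering the vertices of $G_1$ before those of $G_2$, the join has adjacency matrix
$$A(G_1\vee G_2)=\begin{pmatrix} A(G_1) & J_{n_1\times n_2}\\ J_{n_2\times n_1} & A(G_2)\end{pmatrix},$$
since every vertex of $G_1$ is adjacent to every vertex of $G_2$, and the diagonal blocks record the internal adjacencies. Thus $\phi(G_1\vee G_2,x)=\det(xI-A(G_1\vee G_2))$ is the determinant of the block matrix with diagonal blocks $P:=xI_{n_1}-A(G_1)$ and $S:=xI_{n_2}-A(G_2)$ and off-diagonal blocks $-J_{n_1\times n_2}$, $-J_{n_2\times n_1}$.

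First I would assume $x$ is not an eigenvalue of $G_1$, so that $P$ is invertible, and apply the Schur complement identity $\det\begin{pmatrix}P&Q\\R&S\end{pmatrix}=\det(P)\det(S-RP^{-1}Q)$ with $Q=-J_{n_1\times n_2}$, $R=-J_{n_2\times n_1}$. The decisive input is regularity: since $A(G_i)\mathbf{1}=r_i\mathbf{1}$ for the all-ones vector, one gets $P^{-1}\mathbf{1}_{n_1}=\frac{1}{x-r_1}\mathbf{1}_{n_1}$. Writing $J_{n_1\times n_2}=\mathbf{1}_{n_1}\mathbf{1}_{n_2}^{\top}$, the correction term collapses to a scalar multiple of the all-ones matrix, namely $RP^{-1}Q=\mathbf{1}_{n_2}\big(\mathbf{1}_{n_1}^{\top}P^{-1}\mathbf{1}_{n_1}\big)\mathbf{1}_{n_2}^{\top}=\frac{n_1}{x-r_1}J_{n_2}$.

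Next I would evaluate $\det\!\big(xI_{n_2}-A(G_2)-\frac{n_1}{x-r_1}J_{n_2}\big)$. This is a rank-one perturbation of $S$, so the matrix determinant lemma $\det(M-c\,\mathbf{1}\mathbf{1}^{\top})=\det(M)\big(1-c\,\mathbf{1}^{\top}M^{-1}\mathbf{1}\big)$ applies with $M=S$ and $c=\frac{n_1}{x-r_1}$; using once more that $\mathbf{1}_{n_2}^{\top}S^{-1}\mathbf{1}_{n_2}=\frac{n_2}{x-r_2}$, this determinant equals $\phi(G_2,x)\big(1-\frac{n_1n_2}{(x-r_1)(x-r_2)}\big)$. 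Multiplying by $\det(P)=\phi(G_1,x)$ and rewriting $1-\frac{n_1n_2}{(x-r_1)(x-r_2)}=\frac{(x-r_1)(x-r_2)-n_1n_2}{(x-r_1)(x-r_2)}$ yields exactly the stated identity.

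The only subtlety — and the step I would flag carefully — is that the Schur complement and the determinant lemma require $x\neq r_1,r_2$ and $x$ outside the spectra of $G_1$ and $G_2$. This is not a genuine obstacle: both sides of the claimed equation are polynomials in $x$, because the apparent denominators cancel, as $(x-r_1)\mid\phi(G_1,x)$ and $(x-r_2)\mid\phi(G_2,x)$ (the value $r_i$ being an eigenvalue of the $r_i$-regular graph $G_i$). Two polynomials that agree on all but finitely many $x$ are identical, so the identity derived for generic $x$ holds for every $x$.
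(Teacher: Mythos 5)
Your proof is correct. Note, however, that the paper itself offers no proof of this proposition: it is quoted in the Preliminaries as a known result from Cvetkovi\'c--Doob--Sachs, so there is no internal argument to compare against. Your Schur-complement derivation is a legitimate, self-contained route: the two key reductions (that $P^{-1}\mathbf{1}=\tfrac{1}{x-r_1}\mathbf{1}$ by regularity, and that the rank-one correction collapses via the matrix determinant lemma) are exactly right, and your final step --- observing that both sides are polynomials, since $(x-r_i)\mid\phi(G_i,x)$ for a regular graph, and that two polynomials agreeing off a finite set coincide --- properly closes the genericity gap that a careless use of inverses would leave open. For contrast, the classical proof in the cited source proceeds differently: it uses the identity $\overline{G_1\vee G_2}=\overline{G_1}\cup\overline{G_2}$ together with the formula relating $\phi(\overline{G},x)$ to $\phi(G,-x-1)$ for regular graphs, while another standard argument decomposes the eigenspace directly (eigenvectors orthogonal to the all-ones vector on each side survive the join, and the span of the two indicator vectors contributes the quadratic factor $(x-r_1)(x-r_2)-n_1n_2$). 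Your approach trades that structural insight for a purely matrix-algebraic computation; it generalizes readily to joins where the off-diagonal blocks are any rank-one perturbation, whereas the eigenvector argument makes more transparent \emph{why} regularity is the essential hypothesis.
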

\begin{defn}\textnormal{\cite{vaidya2017energy}}
	Let $G$ be a $(p,q)$ graph.Then the $m$-splitting graph of a graph $G$, $spl_m(G)$ is obtained by adding to each vertex $v$ of $ G $  new $ m $ vertices say, $v_1,v_2,...,v_m$ such that $v_i$ ,$1\leq i \leq m$
	is adjacent to each vertex that is adjacent to $v$ in G. The adjacency matrix of $m$-splitting graph of $G$ is \[ A(spl_m(G)) =\begin{bmatrix}
	A(G) & A(G) &  A(G) & \dots  & A(G)\\
	A(G) & O & O & \dots  & O \\
	\vdots & \vdots & \vdots & \ddots & \vdots \\
	A(G) & O & O & \dots  & O\\
	\end{bmatrix}_{(m+1)p}.\]\\
   \end{defn}
	\begin{prop}\textnormal{\cite{vaidya2017energy}}
		Let $G$ be a $(p,q)$ graph. Then the energy of $m$-splitting graph of $G$ is, $\varepsilon (spl_m(G))=\sqrt{1+4m}\varepsilon(G).$
	\end{prop}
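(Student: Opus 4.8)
The plan is to recognize the displayed block matrix $A(spl_m(G))$ as a Kronecker product and thereby reduce the energy computation to finding the spectrum of a single small matrix. Reading off the $(m+1)\times(m+1)$ block pattern shown in the definition, one sees that
\[
A(spl_m(G)) = M \otimes A(G),
\qquad
M = \begin{bmatrix} 1 & 1 & \cdots & 1 \\ 1 & 0 & \cdots & 0 \\ \vdots & \vdots & \ddots & \vdots \\ 1 & 0 & \cdots & 0 \end{bmatrix}_{(m+1)\times(m+1)},
\]
where $M$ has its first row and first column equal to all-ones and zeros elsewhere. Since both $M$ and $A(G)$ are real symmetric, $M\otimes A(G)$ is real symmetric, and by the Kronecker product eigenvalue property recorded above its eigenvalues are exactly the products $\lambda_i\mu_j$, where $\lambda_1,\dots,\lambda_{m+1}$ are the eigenvalues of $M$ and $\mu_1,\dots,\mu_p$ those of $A(G)$, with eigenvectors $x_i\otimes y_j$. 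Consequently the energy factorizes:
\[
\varepsilon(spl_m(G)) = \sum_{i,j} |\lambda_i\mu_j| = \Bigl(\sum_i |\lambda_i|\Bigr)\Bigl(\sum_j |\mu_j|\Bigr) = \varepsilon(M)\,\varepsilon(G).
\]

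The heart of the argument is therefore to compute $\varepsilon(M)$. First I would note that rows $2,\dots,m+1$ of $M$ are identical, so $\operatorname{rank}(M)=2$ and $0$ is an eigenvalue of multiplicity $m-1$. For the two nonzero eigenvalues I would exploit the symmetry of $M$ and look for eigenvectors of the form $(a,b,b,\dots,b)^{\top}$; the eigenvalue equation then collapses to the two scalar relations $a+mb=\lambda a$ and $a=\lambda b$, which eliminate to the quadratic $\lambda^{2}-\lambda-m=0$. Hence the nonzero eigenvalues are $\lambda_{\pm}=\tfrac{1\pm\sqrt{1+4m}}{2}$. (Equivalently, one may expand the characteristic polynomial of $M$ directly and factor out the $x^{m-1}$ coming from the zero eigenvalue.)

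Finally, since $\sqrt{1+4m}\ge 1$ we have $\lambda_{+}>0$ and $\lambda_{-}\le 0$, so
\[
\varepsilon(M) = |\lambda_{+}| + |\lambda_{-}| = \frac{1+\sqrt{1+4m}}{2} + \frac{\sqrt{1+4m}-1}{2} = \sqrt{1+4m},
\]
and substituting into the factorization above gives $\varepsilon(spl_m(G)) = \sqrt{1+4m}\,\varepsilon(G)$, as claimed. The only genuine obstacle is the eigenvalue computation for $M$; everything else is bookkeeping with the Kronecker product. The one point worth stating carefully is that the symmetry of $M$ and $A(G)$ guarantees that the products $\lambda_i\mu_j$ exhaust the spectrum of $M\otimes A(G)$ with the correct multiplicities, so that no eigenvalue is omitted from the energy sum.
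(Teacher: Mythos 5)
Your proof is correct. The paper itself states this proposition without proof (it is quoted from the cited reference on splitting and shadow graph energies), and your argument — writing $A(spl_m(G)) = M \otimes A(G)$, using diagonalizability to get the full spectrum of the Kronecker product as the pairwise products, and computing the two nonzero eigenvalues $\frac{1\pm\sqrt{1+4m}}{2}$ of the rank-two matrix $M$ — is exactly the standard derivation used there, with the exhaustion-of-spectrum point handled carefully.
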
 
\begin{defn}\textnormal{\cite{vaidya2017energy}}
	Let $G$ be a $(p,q)$ graph. Then the $m$-shadow graph $D_m(G)$ of a connected graph $G$ is obtained by taking m copies of $G$  say, $G_1,G_2,...,G_m$ then join each vertex $u$ in $G_i$ to the neighbors of the corresponding vertex $v$  in  $G_j,1\leq i\leq m,1\leq j\leq m.$
	The adjacency matrix of m-shadow graph of $G$ is  
	$A(D_m(G)) =J_m\otimes A(G) .$\\ Note that the number of vertices in $D_m(G)$ is $pm$. \\\\If $m=2$, then the graph $D_2(G)$ is called shadow graph of $G$. 
\end{defn}	
\begin{lem}\textnormal{\cite{vaidya2017energy}}\label{lem1}
	Let $G$ be any graph. Then $\varepsilon(D_m(G))=m\varepsilon(G).$ 
\end{lem}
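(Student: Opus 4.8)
The plan is to read off the spectrum of $D_m(G)$ directly from the identity $A(D_m(G))=J_m\otimes A(G)$ and then sum absolute values. The starting point is the observation that everything reduces to understanding the all-ones matrix $J_m$, since $A(G)$ is left untouched by the tensor factorisation.

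First I would record the spectrum of $J_m$. Since $J_m$ has rank one, the all-ones vector $\mathbf{1}$ is an eigenvector with eigenvalue $m$, while every vector orthogonal to $\mathbf{1}$ lies in the kernel; hence $J_m$ has eigenvalue $m$ with multiplicity $1$ and eigenvalue $0$ with multiplicity $m-1$. Next I would invoke the Kronecker product eigenvalue property (the Proposition stated above): if $\mu$ is an eigenvalue of $J_m$ with eigenvector $y$ and $\lambda_i$ is an eigenvalue of $A(G)$ with eigenvector $x_i$, then $\mu\lambda_i$ is an eigenvalue of $J_m\otimes A(G)$ with eigenvector $y\otimes x_i$. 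Because both $J_m$ and $A(G)$ are real symmetric, each admits an orthonormal eigenbasis, and the tensor products of these basis vectors form an orthonormal eigenbasis of $\mathbb{R}^{mp}$; this guarantees that the products $\mu\lambda_i$ exhaust the spectrum of $J_m\otimes A(G)$ with the correct multiplicities.

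Carrying this out, pairing the eigenvalue $m$ of $J_m$ with $\lambda_1,\dots,\lambda_p$ yields the eigenvalues $m\lambda_1,\dots,m\lambda_p$, while pairing the $(m-1)$-fold eigenvalue $0$ of $J_m$ with each $\lambda_i$ contributes $(m-1)p$ zeros. A quick count confirms $p+(m-1)p=mp$ eigenvalues, matching the order of $D_m(G)$. Finally I would compute
\[
\varepsilon(D_m(G))=\sum_{i=1}^{p}|m\lambda_i|+(m-1)p\cdot|0|=m\sum_{i=1}^{p}|\lambda_i|=m\,\varepsilon(G),
\]
which is the claim.

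I do not expect a genuine obstacle here; the computation is short. The only point requiring a little care is the passage from the Proposition, which asserts merely that each product $\mu\lambda_i$ \emph{is} an eigenvalue, to the stronger statement that these products constitute the \emph{entire} spectrum with multiplicities. That gap is closed by the symmetry of $J_m$ and $A(G)$, which supplies full orthonormal eigenbases whose tensor products span the ambient space; one should make sure to state this explicitly rather than relying on the Proposition as written.
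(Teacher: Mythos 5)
Your proof is correct. Note that the paper does not prove this lemma at all; it is quoted as a known result from the cited reference, so there is no internal proof to compare against. Your argument — diagonalizing $J_m$ (eigenvalue $m$ once, $0$ with multiplicity $m-1$), tensoring orthonormal eigenbases to get the full spectrum of $J_m\otimes A(G)$ with multiplicities, and summing absolute values — is the standard and expected route, and your explicit remark that the paper's Kronecker-product proposition alone only yields \emph{some} eigenvalues (with symmetry needed to conclude these exhaust the spectrum) is a genuine point of care that the paper glosses over.
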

\begin{defn} \textnormal{\cite{sampathkumar1973duplicate}}.
Let $G=(V,E)$ be a $(p,q)$ graph with vertex set $V$ and edge set $E$. Let $W$ be a set such that $V\bigcap W=\emptyset$, $|V|=|W|$ and $f:V\rightarrow W$ be bijective $($for  $a \in V$ we write $f(a)$ as $a'$ for convenience $)$. A duplicate graph of $G$ is $D(G)=(V_1,E_1)$, where the vertex set $V_1=V\cup W$ and the edge set $E_1$ of $D(G)$ is defined as, the edge ab is in $E$ if and only if both $ab'$ and $a'b$ are in $E_1$.
 \\In general m-duplicate graph $D^m(G)$ is defined as $D^m(G)=D^{m-1}(D(G))$. \\Note that the $m$-duplicate graph has $2^mp$ vertices and $2^mq$ edges.
\\\\Note 1.\textnormal{\cite{indulal2006pair}} Energy of the duplicate graph $D(G)$, $\varepsilon(D(G))=2\varepsilon(G).$ 
\end{defn} 
\section{Construction of orderenergetic graphs}
In this section, it is possible to construct an infinite family of orderenergetic graphs from the given orderenergetic graphs. Let $G$ and $H$ be orderenergetic graphs, then $G\cup H$ is orderenergetic. For example, the graph $K_{p,p}\cup mK_2$ is orderenergetic, but this graph is not connected. \\
\\The following theorems give some new methods to construct an infinite family of connected orderenergetic graphs.
\begin{thm}
	Let $G$ be a connected orderenergetic graph of order $p$. Then the m-shadow graph, $D_m(G)$ is a connected orderenergetic graph.
\end{thm}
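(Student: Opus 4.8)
The plan is to verify the two defining requirements of an orderenergetic graph separately: that $D_m(G)$ has energy equal to its order, and that it is connected. The energy-versus-order comparison is immediate from the tools already assembled. First I would record that $D_m(G)$ has exactly $pm$ vertices, as noted in the definition of the $m$-shadow graph. Then, invoking \lemref{lem1}, the energy satisfies $\varepsilon(D_m(G)) = m\,\varepsilon(G)$. Since $G$ is orderenergetic of order $p$, we have $\varepsilon(G) = p$, whence $\varepsilon(D_m(G)) = mp$, which is precisely the order of $D_m(G)$. This establishes the orderenergetic property.

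The substantive part is connectedness, and here I would argue directly from the block structure $A(D_m(G)) = J_m \otimes A(G)$. This encodes the adjacency rule that the copy $a_i$ of a vertex $a$ in the $i$-th layer is adjacent to the copy $b_j$ of $b$ in the $j$-th layer exactly when $a$ and $b$ are adjacent in $G$, for all $1 \le i, j \le m$, the diagonal case $i=j$ included. In particular, the $(i,i)$ block equals $A(G)$, so the subgraph induced on each single layer is isomorphic to $G$ and is therefore connected by hypothesis.

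To link distinct layers I would use the fact that $G$ contains at least one edge, which holds because $\varepsilon(G) = p > 0$ and an edgeless graph has energy zero; thus an orderenergetic graph is never edgeless. Fixing any edge $ab$ of $G$, the off-diagonal blocks supply the edges $a_i \sim b_j$ for every pair of layers $i \ne j$, so each layer is joined to every other layer. Combining the internal connectivity of each layer with these inter-layer edges shows that any two vertices of $D_m(G)$ are joined by a path, so $D_m(G)$ is connected.

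I expect the energy computation to be wholly routine once \lemref{lem1} is quoted; the only point requiring any attention is confirming that the cross-layer edges genuinely exist, which reduces to the elementary observation that an orderenergetic graph must contain an edge. After that remark the connectivity conclusion, and hence the theorem, follows at once.
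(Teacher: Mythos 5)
Your proposal is correct and follows essentially the same route as the paper: the energy claim is obtained by quoting Lemma~\ref{lem1} together with $\varepsilon(G)=p$, exactly as the paper does. The only difference is that the paper merely asserts that the $m$-shadow graph of a connected graph is connected, whereas you prove this claim via the block structure of $J_m\otimes A(G)$ and the observation that an orderenergetic graph must contain an edge --- a worthwhile elaboration, but not a different approach.
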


\begin{proof}
	Since $G$ is orderenergetic, by Lemma \ref{lem1}, $\varepsilon(D_m(G))=m\varepsilon(G)=mp.$ So $D_m(G)$ is orderenergetic. Also the $m$-shadow graph of a connected graph is connected. 
\end{proof}
\begin{rem}
	Let $G$ be an orderenergetic graph. Then the $m$-shadow graph of a duplicate graph, $D_m(D(G))$ is  orderenergetic.
\end{rem}
%
%


\begin{thm}
	Let $G$ be an $r$-regular orderenergetic graph of order $p$. Then $G\vee \overline{K_n}$ is orderenergetic if and only if $n=4p-2r.$
\end{thm}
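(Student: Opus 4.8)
The plan is to compute the spectrum of $G \vee \overline{K_n}$ explicitly via Proposition~\ref{prop2.3}, evaluate its energy, and then impose the orderenergetic condition $\varepsilon(G\vee\overline{K_n}) = p+n$.

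First I would apply Proposition~\ref{prop2.3} with $G_1 = G$ (an $r$-regular graph on $p$ vertices) and $G_2 = \overline{K_n}$ (a $0$-regular graph on $n$ vertices, whose spectrum is $0$ with multiplicity $n$, so $\phi(\overline{K_n},x) = x^n$). This gives
$$\phi(G \vee \overline{K_n}, x) = \frac{\phi(G,x)\,x^{n}}{(x-r)\,x}\bigl[(x-r)x - pn\bigr] = \frac{\phi(G,x)}{x-r}\,x^{n-1}\bigl[x^2 - rx - pn\bigr].$$
Since $G$ is $r$-regular, $r$ is an eigenvalue of $G$, so the factor $(x-r)$ cancels exactly one copy of $r$ from $\phi(G,x)$. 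Hence the eigenvalues of $G \vee \overline{K_n}$ are: the eigenvalues of $G$ with one copy of $r$ deleted; the value $0$ with multiplicity $n-1$; and the two roots of $x^2 - rx - pn = 0$, namely $\mu_{\pm} = \tfrac{1}{2}\bigl(r \pm \sqrt{r^2 + 4pn}\bigr)$.

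Next I would compute the energy term by term. The eigenvalues inherited from $G$ contribute $\varepsilon(G) - |r| = p - r$, using $\varepsilon(G) = p$ and $r > 0$ (an $r$-regular orderenergetic graph must have $r\geq 1$, since $r=0$ forces $\varepsilon(G)=0$). The $n-1$ zeros contribute nothing. Because $pn > 0$ we have $\mu_{+} > 0 > \mu_{-}$, so
$$|\mu_{+}| + |\mu_{-}| = \mu_{+} - \mu_{-} = \sqrt{r^2 + 4pn}.$$
Combining these,
$$\varepsilon(G \vee \overline{K_n}) = (p - r) + \sqrt{r^2 + 4pn}.$$

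Finally I would set this equal to the order $p+n$ of $G\vee\overline{K_n}$, obtaining $\sqrt{r^2+4pn} = n + r$. Since both sides are positive, squaring is reversible and yields $r^2 + 4pn = n^2 + 2nr + r^2$, i.e. $4pn = n^2 + 2nr$, which simplifies to $n = 4p - 2r$. This establishes the claimed equivalence in both directions. The computation is essentially routine once the spectrum is in hand; the only point requiring care is the sign analysis of $\mu_{\pm}$, which is what lets the two new eigenvalues combine cleanly into the single radical $\sqrt{r^2+4pn}$, together with the bookkeeping observation that $r>0$ gives $|r| = r$ when subtracting the deleted eigenvalue from $\varepsilon(G)$.
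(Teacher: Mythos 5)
Your proposal is correct and follows essentially the same route as the paper: both apply Proposition~\ref{prop2.3} to obtain $\phi(G\vee\overline{K_n},x)=x^{n-1}(x-\lambda_2)\cdots(x-\lambda_p)(x^2-rx-np)$, compute $\varepsilon(G\vee\overline{K_n})=(p-r)$ plus the spread of the two quadratic roots, and reduce the orderenergetic condition to $n=4p-2r$. The only cosmetic difference is that you write the roots explicitly and square $\sqrt{r^2+4pn}=n+r$, whereas the paper works with the Vieta relations $\alpha+\beta=r$, $\alpha\beta=-np$; these are interchangeable bits of algebra.
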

\begin{proof}
	Let $r = \lambda_{1}, \lambda_{2}, ...,\lambda_{p}$ be the eigenvalues of $G.$ Since $G$ is orderenergetic, we have, $$\displaystyle\sum_{i=2}^{p} |{\lambda_i}|=p-r.$$ From  Proposition \ref{prop2.3}, the characteristic polynomial of $G \vee \overline{K_n}$ is given by $$\phi (G\vee \overline{K_n},x)=x^{n-1}(x-\lambda_2)(x-\lambda_3)...(x-\lambda_p)(x^2-rx-np).$$ Let $\alpha$ and $\beta$ be the roots of the equation  $x^2-rx-np=0$. It is easy to observe
	that $\alpha$ and $\beta$ are of opposite sign. Without loss of generality we assume that $\alpha>0$ and $\beta<0$, then $\alpha+\beta=r$, $\alpha\beta=-np.$
	Thus the spectrum of $G \vee \overline{K_n}$ is \[spec(G \vee \overline{K_n})=\begin{pmatrix}
	0&\lambda_2&\lambda_3& ...&\lambda_p&\alpha&\beta\\
	n-1&1&1&...&1&1&1\\
	\end{pmatrix}.\]
	Hence 
	$$\varepsilon(G \vee \overline{K_n})=\displaystyle\sum_{i=2}^{p} |{\lambda_i}|+|\alpha|+|\beta|=\displaystyle\sum_{i=2}^{p} |{\lambda_i}|+\alpha-\beta.$$
	If $G \vee \overline{K_n}$ is orderenergetic, then 
	\begin{align*}\varepsilon(G \vee \overline{K_n})=p+n
	&\Longleftrightarrow p-r+\alpha-\beta=p+n\\&\Longleftrightarrow\alpha-\beta=n+r\\
	\end{align*} Also $\alpha+\beta=r$ and $\alpha-\beta=n+r$ implies that $\alpha=\frac{n+2r}{2}$ and $\beta=-\frac{n}{2}.$ 
	\begin{align*}
	\alpha\beta=-np&\Longleftrightarrow\alpha\beta=\frac{-n^2-2nr}{4}\\&\Longleftrightarrow-np=\frac{-n^2-2nr}{4}\\&\Longleftrightarrow 4np=n^2+2nr\\&\Longleftrightarrow n=4p-2r.
	\end{align*}.
\end{proof}
\begin{exam}
	Let $G=C_4$. Then $G\vee \overline{K_{12}}$ is orderenergetic.\\\[spec(G)=\begin{pmatrix}
	-2&0&2\\
	1&2&1\\
	\end{pmatrix}.\]\[spec(G\vee \overline{K_{12}})=\begin{pmatrix}
	-6&-2&0&8\\
	1&1&13&1\\
	\end{pmatrix}.\]\\$\varepsilon(G \vee \overline{K_{12}})=16$ and order of $G \vee \overline{K_{12}}$ is 16.
	\begin{exam}
		Consider $K_2 \vee \overline{K_{6}}, 
		$\begin{figure}[H]
			\centering
			\includegraphics[width=10.0cm]{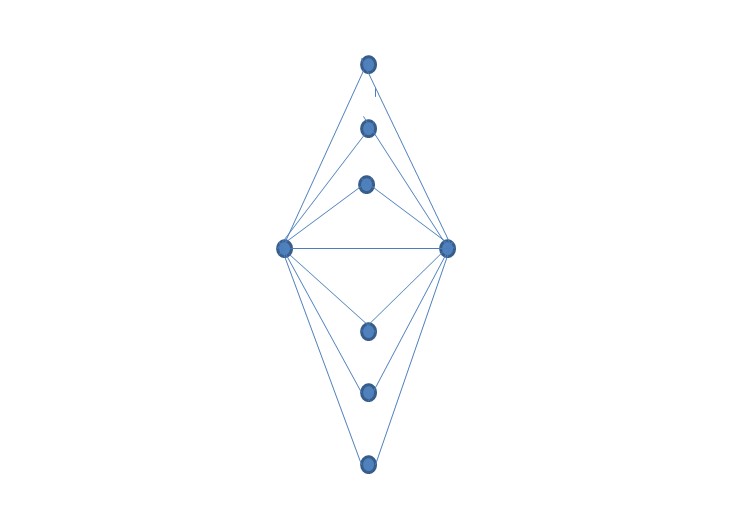}
			\caption{Graph 	$K_2 \vee \overline{K_{6}}$ }
			\label{pict26.jpg}
		\end{figure}\[spec(K_2 \vee \overline{K_{6}})=\begin{pmatrix}
		-3&-1&0&4\\
		1&1&5&1\\
		\end{pmatrix}.\]\\$\varepsilon(K_2 \vee \overline{K_{6}})=8$ and order of $K_2 \vee \overline{K_{6}}$ is 8.
	\end{exam}
	
\end{exam}
\begin{thm}
	Let $G$ be an orderenergetic graph with $p$ vertices. Then the $2$-splitting graph of $G$ , $spl_2(G)$ is orderenergetic.
\end{thm}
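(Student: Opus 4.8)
The plan is to reduce the statement to a direct application of the earlier proposition giving the energy of an $m$-splitting graph, specialised to $m=2$. First I would record the order of $spl_2(G)$: since the $m$-splitting construction adjoins $m$ new vertices to each of the $p$ original vertices, and this is reflected in the adjacency matrix $A(spl_m(G))$ being of order $(m+1)p$, the graph $spl_2(G)$ has exactly $3p$ vertices.

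Next I would invoke the proposition stating that $\varepsilon(spl_m(G)) = \sqrt{1+4m}\,\varepsilon(G)$. The key arithmetic observation is that the radical $\sqrt{1+4m}$ becomes an integer precisely when $1+4m$ is a perfect square, and for $m=2$ one has $\sqrt{1+4\cdot 2} = \sqrt{9} = 3$. Hence $\varepsilon(spl_2(G)) = 3\,\varepsilon(G)$. Because $G$ is orderenergetic we may substitute $\varepsilon(G)=p$, yielding $\varepsilon(spl_2(G)) = 3p$, which is exactly the order of $spl_2(G)$. This shows that $spl_2(G)$ is orderenergetic.

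There is essentially no obstacle here; the content is a single computation resting entirely on the already-stated energy formula. What makes $m=2$ the right choice is transparent once one asks for which $m$ the splitting construction preserves the orderenergetic property: preservation demands $\sqrt{1+4m}\,p = (m+1)p$, that is $1+4m = (m+1)^2$, which simplifies to $m(m-2)=0$. Thus among positive integers only $m=2$ works, which explains why the theorem is phrased for the $2$-splitting graph rather than for a general $m$-splitting graph. If I wished to strengthen the exposition, I would include this last remark to motivate the specific value $m=2$ and to contrast it with the fact that for other $m$ the factor $\sqrt{1+4m}$ is generally irrational and the order is not matched.
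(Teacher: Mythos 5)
Your proposal is correct and is precisely the argument the paper intends (the paper in fact omits the proof, but it set up the proposition $\varepsilon(spl_m(G))=\sqrt{1+4m}\,\varepsilon(G)$ in the preliminaries exactly for this purpose): with $m=2$ the energy is $3\varepsilon(G)=3p$, matching the order $3p$. Your closing remark that $1+4m=(m+1)^2$ forces $m(m-2)=0$, so that $m=2$ is the only positive integer for which the splitting construction preserves orderenergeticity, is a worthwhile addition that the paper leaves implicit.
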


In\textnormal{\cite{stevanovic2020few}}, D.Stevanovic introduced the graph superpath $SP(a_1, a_2,..., a_m)$ obtained by replacing each  vertex $v_i$ of the path $P_m$ with totally disconnected graph $\overline K_{a_i}$. 
Two vertices $u\in \overline K_{a_i}$ and $w\in \overline K_{a_j}$ are adjacent in $SP(a_1, a_2,..., a_m)$  if $v_i$ and $v_j$ are adjacent in $P_m$, $i,j\in \{1,2,...,m\}$. The order of $SP(4, 1, 3, 2, 2, 3, 1, 4)$ is $m(m+1).$   \\\\ For example, the superpath $SP(4, 1, 3, 2, 2, 3, 1, 4).$ 
\begin{figure}[H]
	\centering
	\includegraphics[width=10.0cm]{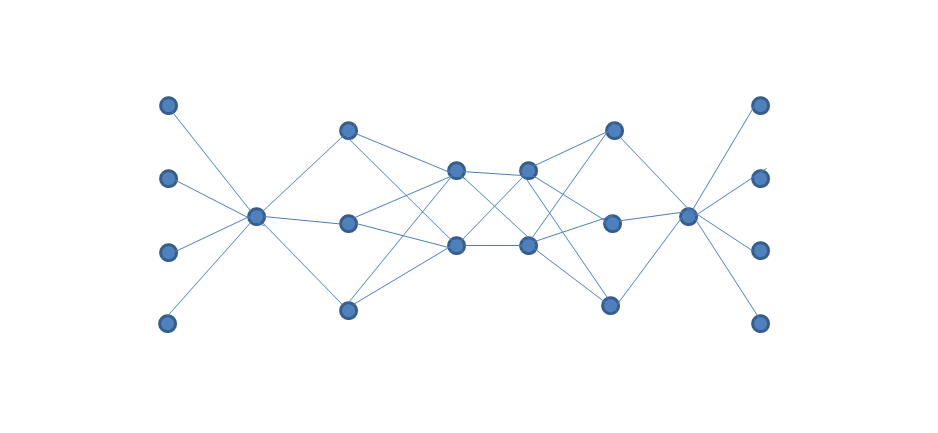}
	\caption{Graph $SP(4, 1, 3, 2, 2, 3, 1, 4).$ }
	\label{pict26.jpg}
\end{figure} 
Note that the maximum degree $\bigtriangleup$ of $SP(m, 1, m-1, 2,... 2, m-1, 1, m)$ is $2m-1$.
\begin{thm}\textnormal{\cite{stevanovic2020few}}
	The superpath $SP(m, 1, m-1, 2,..., 2, m-1, 1, m)$ is integral for each
	natural number m. Its spectrum consists of the simple eigenvalues $\pm m,\pm(m-1),\pm(m-2),
	...,\pm1$ and the eigenvalue 0 with multiplicity $m(m -1)$.
\end{thm}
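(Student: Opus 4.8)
The plan is to exploit the blow-up (inflation) structure of the superpath together with the equitable-partition/quotient technique, which reduces the spectrum to that of a small weighted tridiagonal matrix plus a controlled block of zeros. Write $n=2m$ for the number of parts and $a_1,\dots,a_n$ for the multiplicities $(m,1,m-1,2,\dots,2,m-1,1,m)$. Then the superpath is the inflation of $P_n$: partitioning its vertex set into blocks $V_1,\dots,V_n$ with $|V_i|=a_i$, the $(i,j)$ block of $A:=A(SP(\dots))$ is the all-ones matrix $J_{a_i\times a_j}$ when $|i-j|=1$ and the zero block otherwise. This partition is equitable, so I would decompose $\mathbb{R}^{m(m+1)}=S\oplus S^\perp$, where $S$ is the space of vectors constant on each $V_i$ (dimension $n=2m$) and $S^\perp$ consists of vectors whose entries sum to zero on every block (dimension $\sum_i(a_i-1)=m(m+1)-2m=m(m-1)$).

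The key point is that for $x\in S^\perp$ and $u\in V_i$ one has $(Ax)_u=\sum_{j\sim i}\sum_{w\in V_j}x_w=0$, so $S^\perp\subseteq\ker A$ and $0$ is an eigenvalue of multiplicity at least $m(m-1)$. On the invariant subspace $S$, the matrix $A$ acts as the $n\times n$ quotient matrix $M=A(P_n)D$ with $D=\mathrm{diag}(a_1,\dots,a_n)$, i.e. $M_{ij}=a_j$ when $|i-j|=1$. Since $M=D^{-1/2}\bigl(D^{1/2}A(P_n)D^{1/2}\bigr)D^{1/2}$, it is similar to the symmetric Jacobi matrix $\tilde M=D^{1/2}A(P_n)D^{1/2}$, which has zero diagonal and off-diagonal entries $\sqrt{a_i a_{i+1}}$; hence the $2m$ eigenvalues contributed by $S$ are real. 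Because $P_n$ is bipartite, conjugating by the diagonal $\pm1$ sign matrix of its bipartition sends $\tilde M$ to $-\tilde M$, so these $2m$ eigenvalues are symmetric about $0$ and occur in pairs $\pm\mu_1,\dots,\pm\mu_m$.

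It remains to prove that $\{\mu_1,\dots,\mu_m\}=\{1,2,\dots,m\}$, and this is where the real work lies. I would compute the characteristic polynomial of $\tilde M$ through the standard tridiagonal three-term recurrence $p_k(x)=x\,p_{k-1}(x)-a_{k-1}a_k\,p_{k-2}(x)$ and show, using the palindromic product pattern of the consecutive multiplicities $a_i a_{i+1}$, that $p_{2m}(x)=\prod_{j=1}^m(x^2-j^2)$; equivalently, one may exhibit for each $j\in\{1,\dots,m\}$ an explicit solution of $a_{i-1}v_{i-1}+a_{i+1}v_{i+1}=j\,v_i$ vanishing at both ends, which produces an eigenvector for $\mu=j$. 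Either route is the main obstacle, since it is the only step that uses the precise values $(m,1,m-1,2,\dots)$ rather than just the blow-up structure, and I expect an induction on $m$ or a careful telescoping of the recurrence to be needed here. Once this is established the count is immediate: the $2m$ simple eigenvalues $\pm1,\dots,\pm m$ together with the $m(m-1)$ zeros account for all $m(m+1)$ eigenvalues, so each $\pm j$ is simple and $0$ has multiplicity exactly $m(m-1)$; in particular every eigenvalue is an integer and the superpath is integral.
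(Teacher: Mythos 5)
This theorem is imported by the paper from \cite{stevanovic2020few} and stated without proof, so there is no internal argument to compare against; your proposal has to stand on its own. Its scaffolding is correct and well executed: the block (equitable) partition of the superpath, the observation that vectors summing to zero on each block lie in $\ker A$ (giving multiplicity at least $\sum_i(a_i-1)=m(m-1)$ for the eigenvalue $0$), the identification of the action on block-constant vectors with the quotient matrix $M=A(P_{2m})D$, the symmetrization $\tilde M=D^{1/2}A(P_{2m})D^{1/2}$, and the sign-conjugation argument giving the $\pm$ symmetry are all sound. Moreover the identity you reduce everything to is true: e.g.\ for $m=3$ the weights $a_ia_{i+1}$ are $3,2,4,2,3$ and the recurrence gives $p_6(x)=x^6-14x^4+49x^2-36=(x^2-1)(x^2-4)(x^2-9)$, as predicted.

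The genuine gap is that this reduction is all you prove. The one step that uses the actual multiplicities $(m,1,m-1,2,\dots,2,m-1,1,m)$ — showing that the characteristic polynomial of $\tilde M$ equals $\prod_{j=1}^m(x^2-j^2)$, or equivalently exhibiting an eigenvector of the quotient for each eigenvalue $j\in\{1,\dots,m\}$ — is exactly the content of the theorem, and you defer it with ``I expect an induction on $m$ or a careful telescoping to be needed.'' What you have actually established is only that the spectrum is $\{\pm\mu_1,\dots,\pm\mu_m\}$ together with $0$ of multiplicity at least $m(m-1)$, for some unknown reals $\mu_i$; integrality, simplicity, and the exact multiplicity of $0$ all hinge on the unproven factorization. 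Note also that the deferred step is not routine: the weighted paths for successive $m$ do not nest (the sequence for $m+1$ is a global reshuffle, not an extension, of the sequence for $m$, e.g.\ weight products $3,2,4,2,3$ for $m=3$ versus $4,3,6,4,6,3,4$ for $m=4$), so the three-term recurrence does not by itself set up an induction on $m$; one needs either a two-parameter closed form for the partial polynomials $p_k$ or an explicit eigenvector construction for each $j$ (for $m=2$ these are $(1,2,2,1)$ for $\mu=2$ and $(1,1,-1,-1)$ for $\mu=1$), which is precisely the work carried out in \cite{stevanovic2020few}.
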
 
From this theorem, we can say that the eigenvalues of $SP(m, 1, m-1, 2,..., 2, m-1, 1, m)$  are consecutive integers  $\pm1,\pm2,\pm3,...,\pm m$.
\begin{cor}
	The energy of $SP(m, 1, m-1, 2,... 2, m-1, 1, m)$ is $m(m+1).$
\end{cor}
The following corollary gives the existence of orderenergetic graph of maximum  degree $2m-1 $.
\begin{cor}
	The graph $SP(m, 1, m-1, 2,... 2, m-1, 1, m)$ is orderenergetic for every $m$.\\
\textbf{Observation 1.} The graph $SP(m, 1, m-1, 2,..., 2, m-1, 1, m)$ is a graph with least maximum degree in the collection of all orderenergetic graphs having $m^2+m$ vertices. \\\\
\textbf{Observation 2.} Let $G$ be an orderenergetic graph. Then $G$ is an integral graph. 
\end{cor}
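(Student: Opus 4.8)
The plan is to extract both the order and the energy of the superpath directly from the spectrum furnished by the preceding theorem, so that the statement reduces to a single numerical comparison requiring no new idea. The preceding theorem asserts that $SP(m,1,m-1,2,\ldots,2,m-1,1,m)$ is integral, with spectrum given by the simple eigenvalues $\pm1,\pm2,\ldots,\pm m$ together with the eigenvalue $0$ of multiplicity $m(m-1)$. First I would recover the order $p$ of the graph by summing the multiplicities in this spectrum: the $2m$ nonzero eigenvalues account for $2m$ vertices and the zero eigenvalue for $m(m-1)$ more, giving $p=2m+m(m-1)=m^{2}+m=m(m+1)$, in agreement with the order $m(m+1)$ recorded for these superpaths above.

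Next I would compute the energy from the same spectrum. The zero eigenvalues contribute nothing, and each integer $k\in\{1,\ldots,m\}$ occurs once as $+k$ and once as $-k$, so that
\begin{align*}
\varepsilon\bigl(SP(m,1,m-1,2,\ldots,2,m-1,1,m)\bigr)=\sum_{i=1}^{p}|\lambda_i|=2\sum_{k=1}^{m}k=m(m+1).
\end{align*}
This is exactly the content of the immediately preceding corollary, so I would simply invoke that corollary rather than redo the summation. Comparing the two quantities gives $\varepsilon=m(m+1)=p$, which is the defining equality for an orderenergetic graph, and since the argument is uniform in $m$ this holds for every natural number $m$, completing the proof.

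There is no substantive obstacle here: the corollary is an immediate consequence of the integrality-and-spectrum theorem, and the only point deserving attention is verifying that the listed multiplicities $2m+m(m-1)$ genuinely exhaust all vertices of $SP$, i.e. that there are no eigenvalues beyond those named. Once this vertex count is confirmed, the equality of energy and order is forced. The two appended observations are side remarks not needed for the corollary itself: Observation~1 is an extremal claim about maximum degree that would require a separate lower-bound argument over all orderenergetic graphs on $m^{2}+m$ vertices, and Observation~2 is a statement about integrality whose general scope would have to be examined independently of the orderenergetic property established here.
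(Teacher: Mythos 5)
Your proof is correct and takes essentially the same route as the paper: the corollary is read off from Stevanovi\'c's spectral theorem by noting that the energy $2\sum_{k=1}^{m}k=m(m+1)$ (the preceding corollary) coincides with the order $2m+m(m-1)=m(m+1)$. Your explicit check that the multiplicities sum to the order is a welcome bit of care that the paper leaves implicit, and you are right that the two observations are separate remarks not established by this argument.
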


\section{Hypoenergetic graphs}
\par In 2007, I.Gutman et al.\cite{gutman2007hypoenergetic} introduced the definition of hypoenergetic graphs. In this section, we present some techniques for constructing sequence of hypoenergetic graphs.
\begin{prop} Kronecker product of two hypoenergetic graphs is hypoenergetic.
	\begin{proof}
		Let $G_1$ and $G_2$ be two hypoenergetic graphs with order $n_1$ and $n_2$ respectively. Then $\varepsilon(G_1)< n_1$ and $\varepsilon(G_2)< n_2$. By Lemma 2.1, $\varepsilon(G_1\times G_2)=\varepsilon(G_1) \varepsilon(G_2)<n_1n_2$. Thus Kronecker product of $G_1$ and $G_2$, $G_1 \times G_2$ is hypoenergetic graph.
	\end{proof} 

\end{prop}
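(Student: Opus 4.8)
The plan is to reduce the statement directly to the multiplicativity of energy under the Kronecker product, which is exactly \lemref{lem2.1}. Write $G_1$ and $G_2$ for the two hypoenergetic graphs, with orders $n_1$ and $n_2$; the hypotheses then read $\varepsilon(G_1)<n_1$ and $\varepsilon(G_2)<n_2$. The first thing I would record is that the Kronecker product $G_1\times G_2$ has vertex set $V(G_1)\times V(G_2)$, so its order is exactly $n_1 n_2$; this is the quantity against which we must compare the energy.

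Next I would invoke \lemref{lem2.1} to rewrite $\varepsilon(G_1\times G_2)=\varepsilon(G_1)\,\varepsilon(G_2)$, turning the problem into the purely arithmetic claim that $\varepsilon(G_1)\,\varepsilon(G_2)<n_1 n_2$. This follows by multiplying the two strict inequalities, and the one point that deserves care is that multiplying inequalities is legitimate only because all four quantities are nonnegative: graph energy is a sum of absolute values of eigenvalues, so $\varepsilon(G_i)\ge 0$, while the orders $n_i$ are positive integers. From $0\le\varepsilon(G_1)<n_1$ and $0\le\varepsilon(G_2)<n_2$ one gets $\varepsilon(G_1)\,\varepsilon(G_2)\le\varepsilon(G_1)\,n_2<n_1 n_2$, the last step using $n_2>0$; the degenerate case $\varepsilon(G_1)=0$ is covered separately, since then the product is $0<n_1 n_2$.

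There is essentially no hard part here: the entire content is packaged in \lemref{lem2.1}, and the remaining work is the elementary observation that the product of two numbers, each strictly below a positive bound, stays strictly below the product of the bounds. If anything, the only place to be slightly cautious is the nonnegativity/positivity bookkeeping in the final multiplication of inequalities, which I flagged above. Concluding, $\varepsilon(G_1\times G_2)<n_1 n_2$, which is the order of $G_1\times G_2$, so $G_1\times G_2$ is hypoenergetic.
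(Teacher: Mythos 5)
Your proof is correct and follows exactly the paper's approach: invoke Lemma~2.1 to get $\varepsilon(G_1\times G_2)=\varepsilon(G_1)\varepsilon(G_2)$ and multiply the two strict inequalities. The only difference is your explicit bookkeeping of nonnegativity when multiplying inequalities, which the paper leaves implicit; this is a minor (and welcome) refinement, not a different argument.
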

The following theorem enable us to construct infinitely many hypoenergetic graphs. 
\begin{prop}
	Let $G_1$ be an orderenergetic graph  and $G_2$ be a hypoenergetic graph. Then Kronecker product of $G_1$ and $G_2$, $G_1 \times G_2$ is hypoenergetic graph .
\end{prop}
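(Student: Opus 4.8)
The plan is to argue exactly as in the preceding proposition, replacing one strict inequality by an equality coming from the orderenergetic hypothesis. First I would fix notation: let $n_1$ denote the order of $G_1$ and $n_2$ the order of $G_2$. Since the vertex set of the Kronecker product is $V(G_1)\times V(G_2)$, the order of $G_1\times G_2$ is $n_1 n_2$, so proving hypoenergeticity means establishing $\varepsilon(G_1\times G_2)<n_1 n_2$.

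Next I would record the two structural inputs. Because $G_1$ is orderenergetic, $\varepsilon(G_1)=n_1$. Because $G_2$ is hypoenergetic, $\varepsilon(G_2)<n_2$. The key external tool is \lemref{lem2.1}, which gives the multiplicativity $\varepsilon(G_1\times G_2)=\varepsilon(G_1)\,\varepsilon(G_2)$.

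Then I would simply chain these together. Substituting $\varepsilon(G_1)=n_1$ into the product formula yields $\varepsilon(G_1\times G_2)=n_1\,\varepsilon(G_2)$, and multiplying the strict bound $\varepsilon(G_2)<n_2$ by the positive constant $n_1$ gives $n_1\,\varepsilon(G_2)<n_1 n_2$. Combining these produces $\varepsilon(G_1\times G_2)<n_1 n_2$, which is precisely the claim.

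There is essentially no obstacle here: the argument is a one-line multiplicative estimate, and the only point requiring any care is the bookkeeping on orders (verifying that $G_1\times G_2$ has $n_1 n_2$ vertices, so that the product bound lands against the correct target). One should also note implicitly that $n_1\geq 1$, so that scaling the strict inequality by $n_1$ preserves strictness; this is automatic for any nonempty graph. Thus the proof reduces to substitution and a single scaling of a strict inequality.
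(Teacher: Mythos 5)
Your proof is correct and is essentially identical to the paper's: both invoke Lemma~\ref{lem2.1} for multiplicativity of energy under the Kronecker product, substitute $\varepsilon(G_1)=n_1$ and $\varepsilon(G_2)<n_2$, and conclude $\varepsilon(G_1\times G_2)<n_1n_2$. Your added remarks on the order of $G_1\times G_2$ and on $n_1\geq 1$ preserving strictness are fine (the paper leaves them implicit) but change nothing substantive.
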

	\begin{proof}
	Let $G_1$ and $G_2$ be two  graphs with order $n_1$ and $n_2$ respectively. Since $G_1$ is orderenergetic,  $\varepsilon(G_1)= n_1$ and $G_2$ is hypoenergetic, $\varepsilon(G_2)< n_2$. By Lemma 2.1, $\varepsilon(G_1\times G_2)=\varepsilon(G_1) \varepsilon(G_2)<n_1n_2$. 
\end{proof} 
\begin{exam}
	Let $G=K_{p,p}\times K_{1,3}.$ Then $|V(K_{p,p}\times K_{1,3})|=8p$ and  $\varepsilon(K_{p,p}\times K_{1,3})=4\sqrt{3}p<8p$. So $K_{p,p}\times K_{1,3}$ is hypoenergetic for every $p$ . 
\end{exam}
\begin{prop}
	Let $G$ be a hypoenergetic graph of order  $p$. Then the $m$-shadow graph of $G$, $D_m(G)$ is hypoenergetic for every $m$ .
\end{prop}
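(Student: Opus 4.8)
The plan is to mimic the one-line arguments already used for the two preceding Kronecker-product propositions, now substituting the multiplicative energy identity for shadow graphs in place of Lemma~\ref{lem2.1}. First I would record the two ingredients I need: by hypothesis $G$ has order $p$ and is hypoenergetic, so $\varepsilon(G) < p$; and from the definition of the $m$-shadow graph the number of vertices of $D_m(G)$ is exactly $mp$.

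Next I would invoke Lemma~\ref{lem1}, which gives $\varepsilon(D_m(G)) = m\,\varepsilon(G)$ for an arbitrary graph $G$. Multiplying the strict inequality $\varepsilon(G) < p$ by the positive integer $m$ yields $m\,\varepsilon(G) < mp$, and hence $\varepsilon(D_m(G)) < mp$. Comparing this with the order $mp$ of $D_m(G)$ shows immediately that $D_m(G)$ is hypoenergetic, and the argument is uniform in $m$.

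I anticipate essentially no obstacle: the entire content is carried by Lemma~\ref{lem1}, and the only point requiring a moment's care is to keep the order count $mp$ matched against the scaled energy $m\,\varepsilon(G)$. Since $m \geq 1$ is a positive integer, the scaling preserves the strict inequality, so the conclusion holds for every $m$ with no edge cases to check.
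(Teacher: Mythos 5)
Your proof is correct and is precisely the intended argument: the paper states this proposition without proof (its following example, $\varepsilon(D_m(K_{r,s}))=2m\sqrt{rs}<m(r+s)$, implicitly uses the same reasoning), and your combination of Lemma~\ref{lem1} with the order count $mp$ from the definition of $D_m(G)$ is exactly the one-line justification the authors had in mind. Nothing is missing.
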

\begin{exam}
	Let $G=K_{r,s}$  $r\ne s$. 
	 Then $|V(K_{r,s})|=r+s$ and  $\varepsilon(D_m(K_{r,s}))=2m\sqrt{rs}<m(r+s)=|V(D_m(K_{r,s}))|$ .
\end{exam}
\begin{rem}
	Let $G$ be a hypoenergetic graph. Then the $m$-shadow graph of duplicate graph,  $D_m(D(G))$ is  hypoenergetic.
	\end{rem}
\begin{prop}
	Let $G$ be a hypoenergetic graph of order  $p$. Then the $m$-splitting graph of $G$, $spl_m(G)$ is hypoenergetic for $m>2$ .
\end{prop}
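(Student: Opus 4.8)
The plan is to combine the closed-form energy formula for the $m$-splitting graph with the defining inequality of a hypoenergetic graph, and then reduce the claim to an elementary comparison between $\sqrt{1+4m}$ and $m+1$.

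First I would record the two quantities to be compared. By definition the vertex set of $spl_m(G)$ consists of the original $p$ vertices together with $m$ new vertices attached to each of them, so its order is $(m+1)p$; thus $spl_m(G)$ is hypoenergetic precisely when $\varepsilon(spl_m(G)) < (m+1)p$. On the other side, the proposition on the energy of the $m$-splitting graph gives $\varepsilon(spl_m(G)) = \sqrt{1+4m}\,\varepsilon(G)$, so the entire argument is driven by this single identity.

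Next I would invoke the hypothesis. Since $G$ is hypoenergetic of order $p$, we have the strict inequality $\varepsilon(G) < p$. Substituting this into the energy formula yields $\varepsilon(spl_m(G)) = \sqrt{1+4m}\,\varepsilon(G) < \sqrt{1+4m}\,p$. Hence it suffices to establish $\sqrt{1+4m}\,p \le (m+1)p$, i.e.\ $\sqrt{1+4m} \le m+1$. This last inequality I would settle by squaring: since both sides are positive, it is equivalent to $1+4m \le (m+1)^2 = m^2+2m+1$, that is $0 \le m^2 - 2m = m(m-2)$, which holds for every $m \ge 2$. Chaining the strict inequality coming from the hypoenergetic hypothesis with this bound gives $\varepsilon(spl_m(G)) < (m+1)p$, so $spl_m(G)$ is hypoenergetic.

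There is no real obstacle here—the only place to be careful is the boundary case. At $m=2$ one has $\sqrt{1+4m}=m+1=3$ exactly, so the comparison $\sqrt{1+4m}\le m+1$ is an equality there and the strictness of the final inequality must be carried entirely by $\varepsilon(G)<p$; for $m>2$ the sharper bound $\sqrt{1+4m}<m+1$ already holds strictly, which is presumably why the statement is phrased for $m>2$. I would therefore present the squaring step cleanly so the reader sees that $m(m-2)\ge 0$ is the whole content of the regularity condition on $m$.
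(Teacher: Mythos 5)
Your proposal is correct and follows essentially the same route as the paper: apply the formula $\varepsilon(spl_m(G))=\sqrt{1+4m}\,\varepsilon(G)$, invoke $\varepsilon(G)<p$, and reduce to the comparison $(m+1)^2>1+4m$, i.e.\ $m(m-2)>0$. Your additional observation that the strictness at $m=2$ is already carried by $\varepsilon(G)<p$ (so the conclusion in fact holds for $m\ge 2$) is a correct minor sharpening that the paper does not make, but it does not change the argument.
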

\begin{proof}
	Since $G$ is a hypoenergetic graph, $\varepsilon(G)<p$. Also $|V(spl_m(G))|=p(m+1)$.\\ As $m>2$, \begin{align*}
	m(m-2)>0&\Longrightarrow(m+1)^2> 1+4m\\
	&\Longrightarrow \sqrt{1+4m}\varepsilon(G)<p(m+1)\notag\\
	&\Longrightarrow\varepsilon(spl_m(G))< p(m+1).
	\end{align*}
	Thus $spl_m(G))$ is hypoenergetic graph.
\end{proof}
Let $G$ be a graph of order $p$ and we denote  $G_r^s=K_{r,s}\times G, r,s\in N $. 

It is very interesting to construct hypoenergetic graphs from non hypoenergetic graphs. \\\par The following theorem describes a construction of hypoenergetic graphs from the complete graph. 
\begin{thm}
	Let $G=K_p$ be a complete graph on $p$ vertices and $m\geq 14$.	Then the graph $G_1^m$ is hypoenergetic.
	\begin{proof}The energy of complete graph is $2(p-1)$ and $|V(G_1^m)|=p(1+m)$.\\ As $m\geq 14$,
		\begin{align}
		m(m-14)+1>0
		&\Longrightarrow m^2+2m+1>16m\notag\\
		&\Longrightarrow4\sqrt{m}<(m+1)\notag	\\
		&\Longrightarrow4\sqrt{m}(p-1)<p(m+1)\notag\\
		&\Longrightarrow2\sqrt{m}\varepsilon(K_p)<p(m+1)\notag\\
		&\Longrightarrow\varepsilon(G_1^m)< p(1+m).\label{eq1}
		\end{align}
		Thus	$G_1^m$ is hypoenergetic whenever $m\geq14$.
	\end{proof}
\end{thm}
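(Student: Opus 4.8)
The plan is to compute the order of $G_1^m$ and the energy of $G_1^m$ separately, and then to reduce the desired inequality $\varepsilon(G_1^m) < |V(G_1^m)|$ to a single elementary inequality in $m$ that holds precisely when $m \geq 14$. First I would record the two basic facts I need. Since $G_1^m = K_{1,m} \times K_p$ (recall $G_r^s = K_{r,s} \times G$, so here $r=1$, $s=m$, and $G = K_p$), its vertex set is $V(K_{1,m}) \times V(K_p)$, which has $(1+m)\cdot p$ elements; thus $|V(G_1^m)| = p(1+m)$. For the energy, I would invoke Lemma~\ref{lem2.1}, which gives $\varepsilon(K_{1,m}\times K_p) = \varepsilon(K_{1,m})\,\varepsilon(K_p)$.

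Next I would supply the two ingredient energies. The star $K_{1,m}$ is a complete bipartite graph, so by the formula $\varepsilon(K_{r,s}) = 2\sqrt{rs}$ applied with $r=1$, $s=m$, we get $\varepsilon(K_{1,m}) = 2\sqrt{m}$. The complete graph $K_p$ has energy $2(p-1)$, as the excerpt itself notes at the start of the proof. Multiplying, $\varepsilon(G_1^m) = 2\sqrt{m}\cdot 2(p-1) = 4\sqrt{m}\,(p-1)$.

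The heart of the argument is then the purely algebraic claim that $m \geq 14$ forces $4\sqrt{m} < m+1$. I would prove this by the chain already set up in the displayed computation: from $m \geq 14$ one has $m(m-14) \geq 0$, hence $m^2 - 14m + 1 > 0$, i.e. $m^2 + 2m + 1 > 16m$, that is $(m+1)^2 > 16m = (4\sqrt{m})^2$; taking positive square roots gives $4\sqrt{m} < m+1$. Multiplying this inequality through by the positive quantity $(p-1)$ and then by the relation $\varepsilon(K_p) = 2(p-1)$ yields
\[
\varepsilon(G_1^m) = 2\sqrt{m}\,\varepsilon(K_p) = 4\sqrt{m}\,(p-1) < (m+1)(p-1) < p(1+m) = |V(G_1^m)|,
\]
so $G_1^m$ is hypoenergetic, as required.

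The only genuine obstacle is the threshold arithmetic: one must verify that $14$ is exactly the smallest integer making $(m+1)^2 > 16m$ (equivalently $m^2 - 14m + 1 > 0$), since the roots of $m^2 - 14m + 1$ are $7 \pm 4\sqrt{3} \approx 0.07$ and $13.93$, so the inequality first holds at the integer $m = 14$. A minor subtlety worth flagging is that the quoted formula $\varepsilon(K_p) = 2(p-1)$ and the chain of implications tacitly assume $p \geq 2$ (so that $K_p$ is a nontrivial graph with positive energy); for $p = 1$ the graph is edgeless and the statement degenerates, so I would either assume $p \geq 2$ or note that the claim is vacuous otherwise. Everything else is routine substitution into Lemma~\ref{lem2.1}.
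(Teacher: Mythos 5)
Your proposal is correct and takes essentially the same route as the paper: both rest on the multiplicativity of energy under the Kronecker product (Lemma~\ref{lem2.1}) applied to $G_1^m = K_{1,m}\times K_p$ with $\varepsilon(K_{1,m})=2\sqrt{m}$ and $\varepsilon(K_p)=2(p-1)$, and both reduce the claim to the algebraic inequality $4\sqrt{m}<m+1$, which holds exactly when $m\geq 14$ among the relevant integers. The only differences are cosmetic --- you make explicit the invocation of Lemma~\ref{lem2.1} and the threshold arithmetic that the paper leaves implicit --- and your remark about $p=1$ should read ``trivially true'' rather than ``vacuous,'' since in that case $G_1^m$ is edgeless with energy $0<m+1$, so it is hypoenergetic outright.
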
 
\begin{rem}
	The graph $G=K_p$  is non hypoenergetic but $G_1^m$ is hypoenergetic for $m\geq14$. \\\par Since  $4\sqrt{m}>(m+1)$ for $m<14$, so inequality (\ref{eq1}) is satisfied for every  $p\leq k$, where $k=\lfloor\frac{4\sqrt{m}}{4\sqrt{m}-(m+1)}\rfloor$, $\lfloor x\rfloor$ is floor of x. Thus $G_1^m$, ($m<14$) is hypoenergetic whenever $p\leq k$.
\end{rem}
\begin{cor}
	Let $G$  be any non hyperenergetic graph. Then $G_1^m$ is hypoenergetic for every $m\geq14$. 
\end{cor}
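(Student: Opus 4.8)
The plan is to follow the proof of the preceding theorem almost verbatim, replacing the exact value $\varepsilon(K_p)=2(p-1)$ by the inequality that defines a non hyperenergetic graph. First I would fix the two structural facts that drive the computation. Writing $G_1^m=K_{1,m}\times G$, the star $K_{1,m}$ has order $m+1$ and spectrum $\{\sqrt{m},\,0^{(m-1)},\,-\sqrt{m}\}$, so $\varepsilon(K_{1,m})=2\sqrt{m}$. Hence the order of $G_1^m$ is $p(m+1)$, and by \lemref{lem2.1} its energy factorises as $\varepsilon(G_1^m)=\varepsilon(K_{1,m})\,\varepsilon(G)=2\sqrt{m}\,\varepsilon(G)$.

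Next I would invoke the hypothesis. By definition, $G$ being non hyperenergetic of order $p$ means $\varepsilon(G)\le 2(p-1)$, which is exactly the value attained with equality by $K_p$ in the previous theorem. Substituting into the factorisation gives the single bound $\varepsilon(G_1^m)\le 4\sqrt{m}\,(p-1)$.

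The last ingredient is the scalar estimate already derived in the complete-graph case: for $m\ge 14$ one has $m^2-14m+1>0$, so $(m+1)^2>16m$ and therefore $4\sqrt{m}<m+1$. Chaining the inequalities then yields $\varepsilon(G_1^m)\le 4\sqrt{m}\,(p-1)<(m+1)(p-1)<(m+1)p=|V(G_1^m)|$, which is precisely the statement that $G_1^m$ is hypoenergetic.

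I do not anticipate a genuine obstacle here, since the corollary is essentially the theorem with $\varepsilon(K_p)=2(p-1)$ relaxed to $\varepsilon(G)\le 2(p-1)$, and the only real inequality $4\sqrt{m}<m+1$ for $m\ge14$ is imported unchanged. The one point I would state with care is the direction of the defining inequality: non hyperenergetic is the negation of ``energy $>2(p-1)$'', i.e.\ $\varepsilon(G)\le 2(p-1)$, so the bound enters with the correct sign and the chain of strict inequalities closes.
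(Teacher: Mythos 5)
Your proof is correct and is exactly the argument the paper intends: the corollary is stated without its own proof because it follows from the preceding theorem by replacing the equality $\varepsilon(K_p)=2(p-1)$ with the defining bound $\varepsilon(G)\le 2(p-1)$ of a non hyperenergetic graph, which is precisely what you do, reusing the same factorisation $\varepsilon(G_1^m)=2\sqrt{m}\,\varepsilon(G)$ and the same scalar estimate $4\sqrt{m}<m+1$ for $m\ge 14$. (The only cosmetic caveat is the degenerate case $p=1$, where your middle inequality $4\sqrt{m}(p-1)<(m+1)(p-1)$ becomes an equality $0=0$; the conclusion $\varepsilon(G_1^m)=0<m+1$ still holds trivially, and the paper's own chain has the same feature.)
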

	

\section{ Equienergetic graphs}

In this section, we construct some new pairs of equienergetic graphs.

\begin{prop}
	Let $G$ be a $(p,q)$ graph. Then the graphs $D_m(D(G))$ and $D_{2m}(G)$ are  non-cospectral equienergetic graphs. 
\end{prop}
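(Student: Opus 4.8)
The plan is to observe that, for graphs of equal order, being equienergetic reduces to equality of energies together with non-isomorphism, and that non-cospectrality (which the statement also asserts) in particular implies non-isomorphism. Here the order and energy will be immediate from the results already recorded, so the real work lies in exhibiting a spectral invariant that separates the two graphs.

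First I would settle order and energy. The graph $D(G)$ has $2p$ vertices and, by Note 1, $\varepsilon(D(G))=2\varepsilon(G)$; applying Lemma \ref{lem1} to it shows that $D_m(D(G))$ has $2mp$ vertices with $\varepsilon(D_m(D(G)))=m\,\varepsilon(D(G))=2m\,\varepsilon(G)$. On the other side Lemma \ref{lem1} gives directly that $D_{2m}(G)$ has $2mp$ vertices and $\varepsilon(D_{2m}(G))=2m\,\varepsilon(G)$. Hence both graphs have the same order $2mp$ and the same energy $2m\varepsilon(G)$, which delivers the equienergetic conclusion once non-isomorphism is in hand.

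Next I would compute the two spectra explicitly. Writing $\lambda_1\ge\lambda_2\ge\cdots\ge\lambda_p$ for the eigenvalues of $G$, the duplicate graph has adjacency matrix $\left[\begin{smallmatrix}O&A(G)\\A(G)&O\end{smallmatrix}\right]=A(K_2)\otimes A(G)$, so by the proposition on Kronecker-product eigenvalues its spectrum is $\{\pm\lambda_i:1\le i\le p\}$. Since $A(D_m(H))=J_m\otimes A(H)$ and $J_m$ has eigenvalues $m$ (simple) and $0$ (with multiplicity $m-1$), the same proposition yields that the nonzero eigenvalues of $D_m(D(G))$ are $\pm m\lambda_i$ (together with $0$ of multiplicity $(m-1)2p$), while those of $D_{2m}(G)$ are $2m\lambda_i$ (together with $0$ of multiplicity $(2m-1)p$); both lists total $2mp$ eigenvalues, confirming the orders computed above.

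Finally I would compare. Assuming $G$ has at least one edge, its spectral radius satisfies $\lambda_1>0$; then the largest eigenvalue of $D_{2m}(G)$ is $2m\lambda_1$, whereas the largest eigenvalue of $D_m(D(G))$ is $m\lambda_1<2m\lambda_1$. The two graphs therefore have different spectra, so they are non-cospectral and \emph{a fortiori} non-isomorphic; combined with the previous paragraphs this makes them equienergetic. The only point requiring care is the degenerate case $q=0$, where both graphs are edgeless and hence trivially cospectral, so the statement should be read for graphs with at least one edge. More than a calculation, the crux is noticing that the shadow parameters $2m$ versus $m$ scale the spectral radius of the base graph by different factors, which is precisely what breaks cospectrality while leaving order and energy untouched.
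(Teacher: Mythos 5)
Your proof is correct. The paper states this proposition without any proof, so your argument in fact supplies the missing one, and it does so using exactly the tools the paper provides: Lemma \ref{lem1} and Note 1 for the order/energy count, and the Kronecker-product eigenvalue proposition applied to $A(D(G))=A(K_2)\otimes A(G)$ and $A(D_m(H))=J_m\otimes A(H)$ for the explicit spectra, with non-cospectrality read off from the spectral radii $m\lambda_1$ versus $2m\lambda_1$ (one could equally compare edge counts, $2m^2q$ versus $4m^2q$). Your caveat about $q=0$ is a genuine refinement: for an edgeless $G$ both graphs are $\overline{K_{2mp}}$, hence isomorphic and cospectral, so the proposition as stated implicitly requires $q\geq 1$ --- a hypothesis the paper omits.
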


\begin{prop}\label{3.4}
	Let $G$ be a $(p,q)$ graph and $D^m(G)$ be the m-duplicate graph of $G$. Then $\varepsilon (D^m(G))=2^m \varepsilon(G)$. 
\end{prop}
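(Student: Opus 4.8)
The plan is to prove this by induction on $m$, using the recursive definition $D^m(G) = D^{m-1}(D(G))$ together with Note 1, which states that $\varepsilon(D(H)) = 2\varepsilon(H)$ for any graph $H$. These two ingredients are exactly what the recursive structure of the $m$-duplicate graph calls for, so the factor $2^m$ should emerge by peeling off one duplication at a time.

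For the base case $m = 1$ we have $D^1(G) = D(G)$, and Note 1 gives directly $\varepsilon(D^1(G)) = 2\varepsilon(G) = 2^1\varepsilon(G)$. For the inductive step I would assume the statement at level $m-1$ \emph{for every graph}, and then apply the recursive definition, followed by the inductive hypothesis to the graph $D(G)$, and finally one application of Note 1:
$$\varepsilon(D^m(G)) = \varepsilon\bigl(D^{m-1}(D(G))\bigr) = 2^{m-1}\varepsilon(D(G)) = 2^{m-1}\cdot 2\,\varepsilon(G) = 2^m\varepsilon(G),$$
which closes the induction.

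There is no real obstacle here; the single point that requires care is that the inductive hypothesis must be phrased for an arbitrary graph, because in the recursive unfolding $D^m(G) = D^{m-1}(D(G))$ the hypothesis is invoked at the graph $D(G)$ rather than at $G$ itself. Equivalently, one may avoid the formal induction altogether by observing that $D^m(G) = D(D(\cdots D(G)\cdots))$ with $m$ applications of $D$, and then iterating Note 1, each application contributing a factor of $2$, to reach $2^m\varepsilon(G)$.
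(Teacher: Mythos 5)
Your proof is correct: the induction on $m$, with the hypothesis stated for an arbitrary graph so that it can be applied to $D(G)$ in the recursive unfolding $D^m(G)=D^{m-1}(D(G))$, combined with Note 1 ($\varepsilon(D(G))=2\varepsilon(G)$), gives exactly $\varepsilon(D^m(G))=2^m\varepsilon(G)$. The paper states this proposition without supplying any proof, and your argument is precisely the one its setup (the recursive definition of $D^m$ together with Note 1) intends, so there is nothing to compare beyond noting that you have filled the omitted details correctly.
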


%
%

\begin{prop}The graphs $D^m(G)$ and $D_{2^m}(G)$ are  non-cospectral equienergetic graphs for all $m$.
\end{prop}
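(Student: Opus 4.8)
The plan is to verify that the two graphs share the same order and the same energy, and then to separate them by exhibiting a discrepancy in their spectra. The order is immediate: by the definition of the $m$-duplicate graph, $D^m(G)$ has $2^m p$ vertices, while the shadow graph $D_{2^m}(G)$ has $p\cdot 2^m$ vertices, so both have $2^m p$ vertices. For the energy, Proposition~\ref{3.4} gives $\varepsilon(D^m(G))=2^m\varepsilon(G)$, and Lemma~\ref{lem1} gives $\varepsilon(D_{2^m}(G))=2^m\varepsilon(G)$. Hence the two graphs have equal order and equal energy, and it only remains to prove that they are non-cospectral, which in particular makes them non-isomorphic and so a genuine equienergetic pair.

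To compare spectra I would compute both explicitly from their Kronecker structure. Write $\lambda_1\ge\lambda_2\ge\cdots\ge\lambda_p$ for the eigenvalues of $A(G)$. For the shadow graph, $A(D_{2^m}(G))=J_{2^m}\otimes A(G)$; since the eigenvalues of $J_{2^m}$ are $2^m$ (simple) and $0$ (with multiplicity $2^m-1$), the Kronecker-product eigenvalue proposition of Section~2 shows that the spectrum of $D_{2^m}(G)$ consists of $2^m\lambda_i$ for $1\le i\le p$ together with the eigenvalue $0$ of multiplicity $(2^m-1)p$. For the duplicate graph, note that $A(D(G))=\left(\begin{smallmatrix}O&A(G)\\ A(G)&O\end{smallmatrix}\right)=P\otimes A(G)$, where $P=\left(\begin{smallmatrix}0&1\\1&0\end{smallmatrix}\right)$. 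Using $D^m(G)=D(D^{m-1}(G))$ together with associativity of $\otimes$, an easy induction then gives $A(D^m(G))=P^{\otimes m}\otimes A(G)$.

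The key computation is the spectrum of $P^{\otimes m}$: its eigenvalues are products of $m$ signs $\pm1$, so each of $+1$ and $-1$ occurs with multiplicity $2^{m-1}$ (the number of sign patterns with an even, respectively odd, number of $-1$'s). Applying the Kronecker-product eigenvalue proposition again, the spectrum of $D^m(G)$ consists of $+\lambda_i$ and $-\lambda_i$, each with multiplicity $2^{m-1}$, for $1\le i\le p$. As a sanity check this yields $\varepsilon(D^m(G))=2^m\sum_{i}|\lambda_i|=2^m\varepsilon(G)$, consistent with Proposition~\ref{3.4}. Assuming $G$ has at least one edge, its spectral radius $\lambda_1$ is positive; then the spectral radius of $D_{2^m}(G)$ equals $2^m\lambda_1$, while that of $D^m(G)$ equals $\lambda_1$, and since $2^m>1$ these differ. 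Thus the largest eigenvalues of the two graphs are distinct and the graphs are non-cospectral. The only delicate point is pinning down the identification $A(D^m(G))=P^{\otimes m}\otimes A(G)$ and the multiplicities of $P^{\otimes m}$; once these are in hand, both the equienergetic and the non-cospectral conclusions follow routinely.
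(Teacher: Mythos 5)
Your proof is correct, and it supplies more than the paper does: the paper states this proposition without any proof, leaving the reader to assemble it from Proposition~\ref{3.4} and Lemma~\ref{lem1}. Those two results are exactly what you invoke for the energy part, and your Kronecker decomposition handles the part the paper leaves entirely implicit: writing $A(D^m(G))=P^{\otimes m}\otimes A(G)$ with $P=\left(\begin{smallmatrix}0&1\\1&0\end{smallmatrix}\right)$ gives the symmetric spectrum $\pm\lambda_i$ each with multiplicity $2^{m-1}$, while $A(D_{2^m}(G))=J_{2^m}\otimes A(G)$ gives the spectrum $2^m\lambda_i$ together with $0$ of multiplicity $(2^m-1)p$, so the spectral radii $\lambda_1$ versus $2^m\lambda_1$ separate the two graphs. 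One point you flag in passing deserves emphasis: the assumption that $G$ has at least one edge is not a convenience but a necessity, since for $q=0$ both graphs are the empty graph on $2^m p$ vertices, hence isomorphic and cospectral; likewise $m\geq 1$ is needed, as $D^0(G)=D_1(G)=G$. So the proposition as stated in the paper implicitly requires these hypotheses, and your argument, with those caveats made explicit, is complete.
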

\begin{prop}
	Let $G$ be a simple $(p,q)$ graph. Then $G$ is integral if and only if its $m$-duplicate graph $D^m(G)$ is  integral.
\end{prop}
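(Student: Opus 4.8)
The plan is to reduce the statement to a clean description of the spectrum of $D^m(G)$ in terms of the spectrum of $G$, after which the integrality equivalence is immediate. First I would record the block form of the adjacency matrix of the single duplicate graph $D(G)$. Ordering the vertices of $D(G)$ as $V$ followed by $W$, the defining rule that $ab\in E$ if and only if both $ab'$ and $a'b$ lie in $E_1$ means there are no edges inside $V$ or inside $W$, while the bipartite adjacency between $V$ and $W$ is exactly $A(G)$. Hence
$$A(D(G))=\begin{bmatrix} O & A(G)\\ A(G) & O\end{bmatrix}=\begin{bmatrix}0&1\\1&0\end{bmatrix}\otimes A(G).$$

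Next I would compute $spec(D(G))$. Since $\begin{bmatrix}0&1\\1&0\end{bmatrix}$ has eigenvalues $\pm1$, the Kronecker-product eigenvalue proposition of Section~2 shows that the eigenvalues of $A(D(G))$ are precisely $\{\pm\lambda_i:1\le i\le p\}$ (counted with multiplicity), where $\lambda_1,\ldots,\lambda_p$ are the eigenvalues of $G$. In particular the set of distinct eigenvalues of $D(G)$ is $\{\lambda_i\}\cup\{-\lambda_i\}$, which recovers Note~1 since $\varepsilon(D(G))=\sum_i|{-\lambda_i}|+\sum_i|{\lambda_i}|=2\varepsilon(G)$.

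Then I would iterate. Applying the previous step with $D(G)$ in place of $G$ and using $D^m(G)=D^{m-1}(D(G))$, a straightforward induction on $m$ shows that the multiset of eigenvalues of $D^m(G)$ is obtained from that of $G$ by repeatedly replacing each value $\lambda_i$ with the pair $\pm\lambda_i$; the distinct eigenvalues of $D^m(G)$ are therefore exactly $\{\pm\lambda_i:1\le i\le p\}$, each occurring with multiplicity $2^{m-1}$ times its multiplicity in $G$, which is consistent with the vertex count $2^m p$. The crucial structural fact, carried through the induction, is that the eigenvalue set of $D^m(G)$ is closed under negation and contains every eigenvalue of $G$.

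Finally the equivalence follows directly. If $G$ is integral then each $\lambda_i\in\mathbb{Z}$, whence every $\pm\lambda_i\in\mathbb{Z}$ and $D^m(G)$ is integral. Conversely, if $D^m(G)$ is integral then each of its eigenvalues lies in $\mathbb{Z}$; since every $\lambda_i$ is itself among these eigenvalues, $G$ is integral. The only genuine work is verifying the block decomposition of $A(D(G))$ from the definition and propagating it through the induction, so I expect the adjacency-matrix bookkeeping in the inductive step to be the main (though mild) obstacle; once the spectrum is pinned down as $\{\pm\lambda_i\}$, the integrality statement is immediate.
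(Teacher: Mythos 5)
Your proof is correct and follows essentially the approach the paper's machinery is built for: identifying $A(D(G))$ with $\begin{bmatrix}0&1\\1&0\end{bmatrix}\otimes A(G)$, invoking the Kronecker-product eigenvalue proposition to get the spectrum $\{\pm\lambda_i\}$, and inducting via $D^m(G)=D^{m-1}(D(G))$ so that the eigenvalues of $D^m(G)$ are exactly $\pm\lambda_i$ with multiplicities scaled by $2^{m-1}$. This spectral description is the same one underlying the paper's Note~1 and its results on $D^m(G)$, so both directions of the integrality equivalence follow immediately as you state.
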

The following propositions describes the class of equiorderenergetic graphs.
\begin{prop}
	Let  $G$ be an orderenergetic graph of order $p$. Then the the graphs $spl_2(G)$ and $D_3(G)$ are equiorderenergetic graphs.
\end{prop}
\begin{figure}[H]
	\centering
	\includegraphics[width=10.0cm]{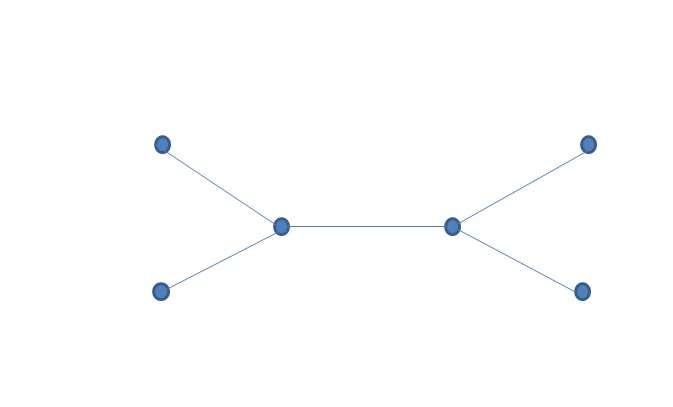}
	\caption{Graph $SP( 2, 1,1,2).$ }
	\label{pict26.jpg}
\end{figure}
\begin{figure}[H]
	\begin{minipage}[b]{0.5\linewidth}
		\centering
		\includegraphics[width=8.0 cm]{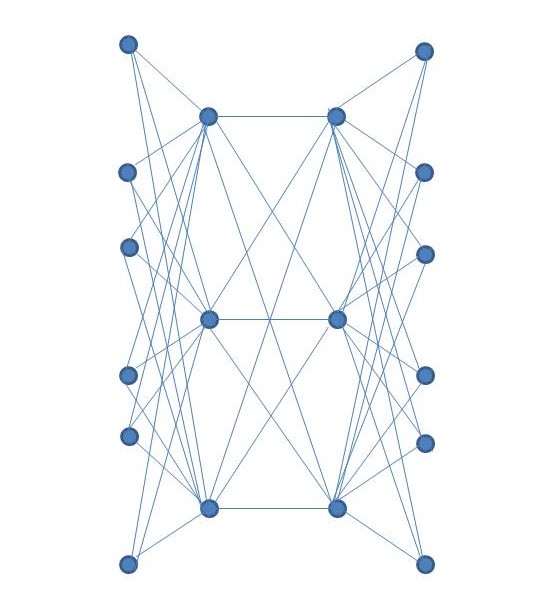}
		\caption{Graph $D_3(SP(2, 1, 1, 2)).$ }
		\label{pict12.jpg}
	\end{minipage}
	\hspace{0.5cm}
	\begin{minipage}[b]{0.5\linewidth}
		\centering
		\includegraphics[width=7.0 cm]{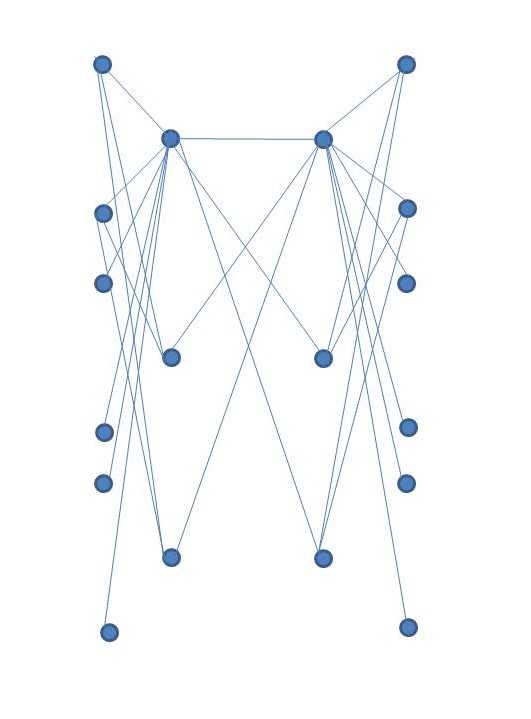}
		\caption{Graph  $spl_2(SP(2, 1, 1, 2)).$}
		\label{pict13.jpg}
	\end{minipage}
\end{figure}
The graphs $D_3(SP(2, 1, 1, 2))$ and $spl_2(SP(2, 1, 1, 2))$ are equiorderenergetic graphs.	
\begin{prop}
	Let  $G$ be a hypoenergetic graph. Then the graphs $spl_2(G)$ and $D_3(G)$ are equihypoenergetic graphs.
\end{prop}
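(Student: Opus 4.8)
The plan is to verify the three ingredients of the conclusion separately---equal order, equal energy, and each graph being hypoenergetic---and then to confirm that the two graphs are genuinely distinct. The order and energy coincidence is immediate from the formulas already recorded. By the definition of the $2$-splitting graph, $|V(spl_2(G))|=3p$, while by the definition of the $3$-shadow graph, $|V(D_3(G))|=3p$; thus both have order $3p$. For the energies, the splitting-graph formula gives $\varepsilon(spl_2(G))=\sqrt{1+4\cdot 2}\,\varepsilon(G)=3\varepsilon(G)$, and \lemref{lem1} gives $\varepsilon(D_3(G))=3\varepsilon(G)$, so the two energies agree.

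Since $G$ is hypoenergetic we have $\varepsilon(G)<p$, whence $3\varepsilon(G)<3p$. Therefore each of $spl_2(G)$ and $D_3(G)$ has energy strictly below its common order $3p$, so both are hypoenergetic with the same energy. It remains only to show they are non-isomorphic, and I expect this to be the one step carrying actual content: the order and energy match by design, so the real question is whether the construction yields two truly different graphs. I would settle this by exhibiting different spectra, which forces non-cospectrality and hence non-isomorphism (under the mild nondegeneracy that $G$ has at least one edge, so that the two graphs are not both edgeless).

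To compare the spectra I would exploit the block structure of the two adjacency matrices. Since $A(D_3(G))=J_3\otimes A(G)$ and $J_3$ has eigenvalues $3,0,0$, the spectrum of $D_3(G)$ consists of $3\lambda_1,\dots,3\lambda_p$ together with $0$ of multiplicity $2p$. For $spl_2(G)$ I would test eigenvectors of the form $(av,bv,cv)^{\!\top}$, where $v$ is an eigenvector of $A(G)$ for the eigenvalue $\lambda$; the block equations collapse to the scalar relation $\theta^2-\lambda\theta-2\lambda^2=0$, i.e. $(\theta-2\lambda)(\theta+\lambda)=0$, yielding eigenvalues $2\lambda$ and $-\lambda$, while the antisymmetric vectors $(0,v,-v)^{\!\top}$ contribute the eigenvalue $0$ with multiplicity $p$. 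Hence $spl_2(G)$ has spectrum $\{2\lambda_i\}\cup\{-\lambda_i\}\cup\{0^{(p)}\}$. Comparing spectral radii, $D_3(G)$ has largest eigenvalue $3\lambda_1$ whereas $spl_2(G)$ has largest eigenvalue $2\lambda_1$; since any graph with an edge satisfies $\lambda_1>0$, these values differ, so the graphs are non-cospectral and therefore non-isomorphic. This completes the verification that $spl_2(G)$ and $D_3(G)$ are equihypoenergetic, and as a consistency check both spectra reproduce the common energy $3\varepsilon(G)$ computed above.
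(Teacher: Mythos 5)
Your proof is correct, and its first half coincides with the argument the paper intends: the paper states this proposition with no proof at all, the implicit justification being exactly your bookkeeping --- $|V(spl_2(G))|=|V(D_3(G))|=3p$, then $\varepsilon(spl_2(G))=\sqrt{1+4\cdot2}\,\varepsilon(G)=3\varepsilon(G)=\varepsilon(D_3(G))$ by the splitting-graph energy formula and \lemref{lem1}, and finally $3\varepsilon(G)<3p$ because $G$ is hypoenergetic. What you do differently is to actually verify non-isomorphism, and this is a substantive addition rather than a flourish: the paper's definition of equienergetic (hence equihypoenergetic) graphs requires the two graphs to be non-isomorphic, and while the paper records non-cospectrality explicitly in its companion proposition on $D_m(D(G))$ and $D_{2m}(G)$, it omits any such check here. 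Your spectral computation is right: $A(spl_2(G))$ is the Kronecker product of a $3\times3$ matrix with eigenvalues $2,-1,0$ and $A(G)$, so $spl_2(G)$ has spectrum $\{2\lambda_i\}\cup\{-\lambda_i\}\cup\{0^{(p)}\}$, whereas $A(D_3(G))=J_3\otimes A(G)$ gives spectrum $\{3\lambda_i\}\cup\{0^{(2p)}\}$; the spectral radii $2\lambda_1$ and $3\lambda_1$ differ as soon as $G$ has an edge, yielding non-cospectrality and hence non-isomorphism. Your caveat about edgeless graphs is also well taken: for $G=\overline{K_p}$, which is hypoenergetic, both constructions produce the empty graph on $3p$ vertices, so the proposition genuinely needs the harmless assumption that $G$ has at least one edge --- a gap in the paper's statement, not in your argument.
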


\subparagraph{Conclusion}
In this paper, we  construct some family of orderenergetic graphs from the known orderenergetic graphs.  Also, some new families of hypoenergetic graphs are derived by using some graph operations. Moreover, the problem for constructing equienergetic graphs are discussed. In addition to that a new class of equiorderenergetic and equihypoenergetic graphs are obtained. 
\bibliography{refe1hypoorder}
\bibliographystyle{amsplain}

\end{document}